\numberwithin{equation}{section}
\def\A{{\mathcal A}}
\def\Z{{\mathbb Z}}
\def\pmod #1{\ ({\rm{mod}}\ #1)}
\theoremstyle{plain}
\newtheorem{theorem}{Theorem}
\newtheorem{corollary}{Corollary}
\theoremstyle{definition}
\newtheorem*{acknowledgment}{Acknowledgments}
\theoremstyle{remark}
\newtheorem{remark}{Remark}
\begin{document}

\title
{A note on the sum of reciprocals}

\author{Yuchen Ding}
\address {Department of Mathematics, Nanjing
University, Nanjing 210093, People's Republic of China}
\email{dg1721001@smail.nju.edu.cn}
\author{Yu-Chen Sun}
\address {Medical School, Nanjing
University, Nanjing 210093, People's Republic of China}
\email{syc@smail.nju.edu.cn}

\keywords{sum of reciprocals; represent; the related topic of covering in $\Z$}

\subjclass[2010]{Primary 	11A05; Secondary 	11A51, 11B75, 05A17}

 \begin{abstract}
 For a fixed positive integer $m$ and any partition $m = m_1 + m_2 + \cdots + m_e$ , there exists a sequence $\{n_{i}\}_{i=1}^{k}$ of positive integers such that
$$m=\frac{1}{n_{1}}+\frac{1}{n_{2}}+\cdots+\frac{1}{n_{k}},$$
with the property that partial sums of the series $\{\frac{1}{n_i}\}_{i=1}^{k}$ can only represent the integers with the form $\sum_{i\in I}m_i$, where $I\subset\{1,...,e\}$.

\end{abstract}

\maketitle

\section{Introduction}
\setcounter{lemma}{0}
\setcounter{theorem}{0}
\setcounter{corollary}{0}
\setcounter{remark}{0}
\setcounter{equation}{0}
\setcounter{conjecture}{0}

Let $a+n\Z$ denote the arithmetic progression $\{x\in\Z: x\equiv a \pmod{n}\}$. For A finite system $ \A=\{a_i+n_i \Z\}_{i=1}^{k} $, we define the covering function $w_{\A}$ over $\Z$ by
$$
w_{\A}:=|\{1 \leq i \leq k: x \in a_i+n_i \Z \}|
$$
the system $\A$ is called an $m$-cover of $\Z$, when $w_{\A}\geq m$ for all integers $x$. In particular, we say that $\A$ is an exact $m$-cover if $w_{\A}=m$ for all $x\in \Z$
which all integers belong to exactly $m$ times is called an exactly $m$-cover.

The conception of covering in $\Z$ was first mentioned by Erd\"os \cite{E50} and has been investigated in many papers (e.g.\cite{BFF88,C03,GS05,P74,S00,PZ09}).

 In \cite{P76}, Porubsk\'y firstly studied the exact $m$-cover and get an evident but interesting result
 $$\sum_{i=1}^{k}\frac{1}{n_i} = m.$$ Also for $m$-cover, clearly we have $\sum_{i=1}^{k}1/n_{i} \geq m$.
 
 In \cite{Z89}, Zhang discovered an attractive and not trivial connection between covering systems and Egyptian fractions. Provided $\A$ is a $1$-cover of $\Z$, He showed that
 $$\sum_{s\in I}\frac{1}{n_s} \in \Z^+,$$
 for some $I\subset\{1,...,k\}$.

 Furthermore, Sun did a lot of researches about how to combine the sum of reciprocals of residue classes modules with the covering system. For example, in $1992$, Sun has showed that for each $n=1,...,m$, there exist (at least) $\binom{k}{2}$ subsets $I$ of $\{1,...,k\}$ such that
  $$\sum_{s\in I}\frac{1}{n_i}=n.$$
 For more related results, readers may refer to \cite{S95,S96}.
 
 Suppose that $\A_1,\A_2$ are $m_1$-cover and $m_2$-cover over $\Z$, obviously $\A=A_1\cup\A_2$ constitutes a $(m_1+m_2)$-cover. Conversely, in \cite{P76} Porubsk\'y asked that whether for each $m\geq 2$ there exists an exact $m$-cover of $\Z$ which cannot be split into an exact $n$-cover and exact $(m-n)$ cover with $1\leq n<m$. Then Zhang \cite{Z91} claimed that the question mentioned by Porubsk\'y is true.

Motivated by above researches, we may consider whether for each $m\geq 1$ there exist a series $\{n_i\}_{i=1}^{k}$ satisfies that $m=\sum_{i}^{k}1/n_i$ and any partial sum of $\{n_i\}_{i=1}^k$ on longer belongs to $\Z$.

\begin{theorem}\label{main1}
For every given positive integer $m$, there exists a sequence $\{n_{i}\}_{i=1}^{k}$ of positive integers satisfying that
$$m = \frac{1}{n_{1}}+\frac{1}{n_{2}}+\cdots+\frac{1}{n_{k}}$$
with $\sum_{i\in I}\frac{1}{n_{i}}\not\in \mathbb{Z}$ for any $\emptyset \not = I\subsetneqq \{1,2,...,k\}$.
\end{theorem}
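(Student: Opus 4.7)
I proceed by induction on $m$. The base case $m=1$ is handled by the classical identity $1 = \tfrac12 + \tfrac13 + \tfrac16$: every non-empty proper subset of $\{\tfrac12,\tfrac13,\tfrac16\}$ sums to a value strictly in $(0,1)$, hence is never an integer.

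For the inductive step, assume $\{n_1,\ldots,n_k\}$ represents $m$ with the stated property. The naive attempt to deduce the case $m+1$ by adjoining any Egyptian representation of $1$ fails immediately: the original $m$-block and the new $1$-block each form proper subsets summing to integers. A successful construction must therefore ``glue'' the two blocks so as to destroy both bad subsets simultaneously.

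My plan is to choose a large prime $p$ coprime to $N := \operatorname{lcm}(n_1,\ldots,n_k)$ and replace one term $\tfrac{1}{n_j}$ in the $m$-block by $p$ equal copies of $\tfrac{1}{p n_j}$. By a $p$-adic coprimality argument this preserves the representation of $m$ as well as the property that the only integer subset sums are $0$ and $m$ (any proper subset of the $p$ copies contributes a fraction whose denominator retains the prime $p$, which cannot be cancelled by the remaining block whose lcm is coprime to $p$). I then adjoin a representation of $1$ containing the specific term $\tfrac{1}{p n_j(p n_j-1)}$, with all other denominators chosen coprime to $N$ and involving further large primes. Applying the identity $\tfrac{1}{p n_j}+\tfrac{1}{p n_j(p n_j-1)}=\tfrac{1}{p n_j-1}$, I merge one copy of $\tfrac{1}{p n_j}$ from the $m$-block with the term $\tfrac{1}{p n_j(p n_j-1)}$ of the $1$-block into the single term $\tfrac{1}{p n_j-1}$. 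The merge creates a genuine coupling: no subset of the combined collection can recover the full $m$-block or the full $1$-block in isolation, since both would now need to include, or both exclude, the merged element.

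The principal obstacle is verifying that this coupling introduces no \emph{new} integer subset sum. Every subset of the combined collection corresponds to a coupled pair $(A,B)$, with $A$ a subset of the $m$-block and $B$ of the $1$-block. The cases $A=\emptyset,\ B=\text{full}$ and $A=\text{full},\ B=\emptyset$ are ruled out by the coupling, while mixed cases reduce to a gcd argument: the non-integer partial sums from the two blocks have reduced denominators that are coprime (governed by $N$ and $p$ on one side, by the $1$-block's further primes on the other), so their sum cannot be an integer. Making all these conditions line up requires a careful simultaneous choice of the extra primes and the $1$-representation, together with a check of the handful of subsets that straddle the merged element, and this case analysis is where the bulk of the work lies.
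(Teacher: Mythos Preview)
Your proposal is an outline rather than a proof, and the outlined construction has a concrete obstruction at the point you defer. You ask for an Egyptian representation of $1$ containing the term $\tfrac{1}{pn_j(pn_j-1)}$ whose remaining denominators are coprime to $N=\operatorname{lcm}(n_1,\dots,n_k)$. This is impossible once $n_j>1$: since $pn_j(pn_j-1)\equiv 0\pmod{n_j}$, the numerator of $1-\tfrac{1}{pn_j(pn_j-1)}$ is $\equiv -1\pmod{n_j}$, so the reduced denominator of this difference is still divisible by $n_j\mid N$; but any sum of unit fractions with denominators coprime to $N$ has reduced denominator coprime to $N$. If you drop the coprimality hypothesis, your ``mixed case'' separation collapses: after unmerging $\tfrac{1}{pn_j-1}=\tfrac{1}{pn_j}+\tfrac{1}{pn_j(pn_j-1)}$, the $1$-block piece $\tfrac{1}{pn_j(pn_j-1)}$ shares both $p$ and the prime factors of $n_j$ with the $m$-block, so the two blocks' reduced denominators are not coprime and the claimed gcd argument does not rule out cancellation. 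The ``careful simultaneous choice'' you postpone is thus not bookkeeping but the essential difficulty, and the scheme as written does not close.

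The paper sidesteps induction and gluing entirely. Using the divergence of $\sum 1/p$, it picks distinct primes $p_1,\dots,p_l$ with $\sum_{j\le l}1/p_j<m<\sum_{j\le l+1}1/p_j$, writes the defect as $g/\prod_j p_j$ with $\gcd\bigl(g,\prod_j p_j\bigr)=1$, and invokes Dirichlet's theorem to find a further prime $s>p_l$ with $gs\equiv 1\pmod{\prod_j p_j}$. This yields
\[
m=\sum_{j=1}^{l}\frac{1}{p_j}+\frac{t}{s}+\frac{1}{s\prod_{j}p_j},\qquad 0<t<s,
\]
so every denominator in the resulting sequence is built from the pairwise distinct primes $p_1,\dots,p_l,s$, and the non-integrality of any proper nonempty partial sum is a one-line valuation check at a suitable $p_j$ or at $s$. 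This direct construction is both shorter and avoids the coupling problem your inductive plan runs into.
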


In fact, we can prove a stronger result. 

\begin{theorem}\label{main2}
Let $m$ be a given positive integer, for any partition $m=m_{1}+m_{2}+\cdots+m_{e},1\leq m_{1}\leq m_{2}\leq\cdots\leq m_{e}\leq m$, there exists a sequence $\{n_{k}\}$ of positive integers such that
$$m=\frac{1}{n_{1}}+\frac{1}{n_{2}}+\cdots+\frac{1}{n_{k}}$$
which satisfies
$$\Omega\{n_{1},...,n_{k}\}=\bigg\{m_{i_{1}}+
\cdots+m_{i_{d}}:1\leq i_{1}<\cdots< i_{d}\leq e,\{i_{1},...,i_{d}\}\subset\{1,...,e\}\bigg\},$$
where $\Omega\{n_{1},n_{2},...,n_{k}\}=\bigg\{\sum\limits_{i\in I}\frac{1}{n_{i}}:I\subset \{1,2,...,k\}\bigg\}\cap\mathbb{Z}$. One may also require that
$$\bigg|\bigg\{I:I\subset \{1,2,...,k\},\sum\limits_{i\in I}\frac{1}{n_{i}}\in\mathbb{Z}\bigg\}\bigg|=2^{e}.$$
\end{theorem}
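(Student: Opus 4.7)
I would prove Theorem \ref{main2} by induction on $e$, the number of parts in the partition. The base case $e=1$ is precisely Theorem \ref{main1}.

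For the inductive step, assume the conclusion for partitions into $e-1$ parts, and let $m = m_1 + \cdots + m_e$. Apply the inductive hypothesis to the partition $(m_1,\ldots,m_{e-1})$ of $m - m_e$ to obtain a sequence $\{a_i\}_{i=1}^{K}$ with $\sum 1/a_i = m - m_e$, whose integer subset sums are exactly $\{\sum_{j \in J} m_j : J \subseteq \{1,\ldots,e-1\}\}$, realized by exactly $2^{e-1}$ integer-summing subsets of $\{a_i\}$. The plan is to append a representation of $m_e$ that is $p$-adically separated from the $a$-block. Choose a prime $p$ strictly larger than every $a_i$, so $p \nmid a_i$. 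Apply Theorem \ref{main1} to the integer $pm_e$ to produce a representation $pm_e = \sum_{i=1}^{k_e} 1/c_i$ with no nontrivial proper subset summing to an integer, arranged (by choice of representation) so that $p \nmid c_i$ for each $i$. Set $b_i = p c_i$, so $m_e = \sum 1/b_i$, and take the final sequence to be the concatenation $\{a_i\} \cup \{b_i\}$.

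To verify, decompose any subset as $I = I_a \sqcup I_b$ with partial sum $\sigma = \sigma_a + \sigma_b$, where $\sigma_b = \tfrac{1}{p} \sum_{i \in I_b} \tfrac{1}{c_i}$. Since $p \nmid a_i$, we have $v_p(\sigma_a) \geq 0$. If $I_b$ is empty or the full $b$-block then $\sigma_b \in \{0, m_e\} \subset \Z$, and the inductive hypothesis contributes exactly $2^{e-1} \cdot 2 = 2^e$ integer-summing subsets realizing the values $\{\sum_{j \in J} m_j : J \subseteq \{1,\ldots,e\}\}$, as required. The crucial remaining case is $I_b$ proper nonempty: one must show $\sigma \notin \Z$. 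Since $v_p(\sigma_a) \geq 0$, it suffices to show $v_p(\sigma_b) < 0$, equivalently (using $p \nmid c_i$) the $\mathbb{F}_p$-condition
\[
\sum_{i \in I_b} c_i^{-1} \not\equiv 0 \pmod{p}.
\]

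The main obstacle is ensuring this $\mathbb{F}_p$-condition holds simultaneously for every proper nonempty $I_b \subseteq \{1,\ldots,k_e\}$. Theorem \ref{main1} only guarantees that $\sum_{i \in I_b} 1/c_i$ is non-integer, which is strictly weaker than $p$ not dividing its numerator (the latter excludes, for example, values like $p/2$ that are non-integer but still lie in $p\Z_{(p)}$). The strategy I would pursue is to exploit the flexibility of Theorem \ref{main1} via the splitting identity $1/c = 1/(c+1) + 1/(c(c+1))$: by showing that iterated splittings can be performed while preserving the no-proper-subset-integer property, one generates a large family of valid representations of $pm_e$, and for $p$ sufficiently large one can argue that at least one such representation avoids all $2^{k_e} - 2$ ``bad'' hyperplanes in $\mathbb{F}_p^{k_e}$ on which some proper nonempty subset of the $c_i^{-1}$ sums to zero. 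Establishing this modular refinement of Theorem \ref{main1}—a residue-class strengthening that remains compatible with the splitting procedure—is the central technical step of the proof.
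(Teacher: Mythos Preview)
Your inductive framework is sound up to the point you yourself flag: the reduction to the $\mathbb{F}_p$-condition $\sum_{i\in I_b} c_i^{-1}\not\equiv 0\pmod p$ for every proper nonempty $I_b$ is correct, but the strategy you sketch for obtaining this ``modular refinement'' of Theorem~\ref{main1} does not go through as stated. Two concrete obstacles. First, the splitting $1/c = 1/(c+1)+1/(c(c+1))$ does \emph{not} in general preserve the no-proper-integer-subset property: after a split you create new partial sums (those containing exactly one of the two new terms together with an arbitrary subset of the old terms) that were absent before, and nothing prevents such a sum from being an integer unless $c+1$ happens to be coprime to all other denominators. Second, even granting preservation, each split increases $k_e$ by one and hence doubles the number of bad conditions to $2^{k_e+r}-2$ after $r$ steps; since the $c_i$ are not free parameters in $\mathbb{F}_p^{k_e}$ but are tied together by the integrality constraint $\sum 1/c_i=pm_e$, the informal ``enough representations to avoid all hyperplanes for large $p$'' count is not justified.

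The paper sidesteps this difficulty entirely by never invoking Theorem~\ref{main1} as a black box. It constructs each block $m_c$ explicitly as
\[
m_c=\sum_{j=l_c+1}^{l_{c+1}}\frac{1}{p_j}+\frac{\tilde t_c}{\tilde s_c}+\frac{1}{\tilde s_c\prod_{j=l_c+1}^{l_{c+1}}p_j},
\]
using pairwise disjoint sets of primes $\{p_{l_c+1},\dots,p_{l_{c+1}},\tilde s_c\}$ across the $e$ blocks (the $\tilde s_c$ are produced by Dirichlet's theorem). Because every denominator in block $e$ is coprime to every denominator in blocks $1,\dots,e-1$, multiplying an integral subset-sum $A+B$ by the product of all block-$(<e)$ denominators forces $B\in\Z$ immediately; no $p$-adic strengthening of Theorem~\ref{main1} is needed. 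If you want to salvage your scheme, the natural fix is to drop the scaling by $p$ and instead reuse the explicit construction of Theorem~\ref{main1} for $m_e$ on a fresh interval of primes larger than every $a_i$---which is precisely the paper's argument.
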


Let us give a simple explanation about why Theorem \ref{main2} implies Theorem \ref{main1}. We choose $e=1$ and exclude the case that the subset $I$ of $\{1,...,k\}$ equal to $\emptyset$, Then we obtain the desired result.

\begin{corollary}\label{simple2}
For every given positive integer $m$, there exists a sequence $\{n_{i}\}_{i=1}^{k}$ of positive integers satisfying that
$$m = \frac{1}{n_{1}}+\frac{1}{n_{2}}+\cdots+\frac{1}{n_{k}}$$
with $\sum_{i\in I}\frac{1}{n_{i}}\in \{n, m-n\}$ for any positive integer $1\leq n< m$ and $\emptyset \not = I\subsetneqq \{1,2,...,k\}$.
\end{corollary}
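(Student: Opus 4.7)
The plan is to derive the corollary as an immediate specialization of Theorem \ref{main2} to the case $e=2$. Given the positive integer $m$ and any integer $n$ with $1 \leq n < m$, I would choose the binary partition $m = m_1 + m_2$ defined by $m_1 := \min\{n, m-n\}$ and $m_2 := \max\{n, m-n\}$. This satisfies the required ordering $1 \leq m_1 \leq m_2 \leq m$ of Theorem \ref{main2}, so invoking that theorem produces a sequence $\{n_i\}_{i=1}^{k}$ of positive integers with $m = \sum_{i=1}^{k} 1/n_i$ and integer subset sums
\[
\Omega\{n_{1},\dots,n_{k}\} = \{0,\ m_{1},\ m_{2},\ m_{1}+m_{2}\} = \{0,\ n,\ m-n,\ m\}.
\]

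To finish, I would fix any proper nonempty $I \subsetneqq \{1,\dots,k\}$ such that $\sum_{i \in I} 1/n_i$ is an integer. Since every $1/n_i$ is strictly positive, one has $0 < \sum_{i \in I} 1/n_i < m$: the left inequality uses $I \neq \emptyset$ and the right uses $I \neq \{1,\dots,k\}$. Combined with the explicit description of $\Omega$, this forces $\sum_{i \in I} 1/n_i \in \{n,\ m-n\}$, which is the desired conclusion.

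There is no real obstacle in the corollary itself; it is merely the $e=2$ instance of Theorem \ref{main2} with $n$ playing the role of $m_1$ (after swapping with $m_2$ if necessary), and the entire substance of the result sits in the construction of the sequence provided by Theorem \ref{main2}. The only detail requiring any attention is the ordering $m_1 \leq m_2$, which is handled automatically by the min/max assignment above.
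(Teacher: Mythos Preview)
Your proposal is correct and matches the paper's own justification, which is simply the one-line remark that the corollary is the $e=2$ case of Theorem~\ref{main2}. You have merely spelled out the obvious details (the $\min/\max$ assignment to respect $m_1\le m_2$, and the observation that a proper nonempty subset forces the sum strictly between $0$ and $m$), so there is nothing to add.
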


\begin{remark}
This corollary is immediately evident by our Theorem with the case $e=2$. We think this is very related to Zhang's result \cite{Z91} which affirmed the Porubsk\'y question \cite{P76} that whether for each $m\geq 2$ there exists an exact $m$-cover of $\Z$ which cannot be split into an exact $n$-cover and exact $(m-n)$-cover with $1\leq n<m$ is true.
\end{remark}
\section{Proof of Theorem \ref{main2}}
\setcounter{lemma}{0}
\setcounter{theorem}{0}
\setcounter{corollary}{0}
\setcounter{remark}{0}
\setcounter{equation}{0}
\setcounter{conjecture}{0}

\begin{proof}[Proof of Theorem \ref{main2}]
We use $p_{j}$ to denote the $jth$ prime. It is well known that the reciprocals of all the primes diverges. So there exists $e$ integers $0=l_{1}<l_{2}<\cdots<l_{e}<l_{e+1}$ such that
$$\sum_{j=l_{c}+1}^{l_{c+1}}\frac{1}{p_{j}}<m_{c}<\sum_{j=l_{c}+1}^{l_{c+1}+1}\frac{1}{p_{j}}$$
for $c=1,2,...,e$.
For any $1\leq c\leq e$, suppose that
$$m_{c}=\sum_{j=l_{c}+1}^{l_{c+1}}\frac{1}{p_{j}}+\frac{g_{c}}{\prod_{j=l_{c}+1}^{l_{c+1}}p_{j}}.$$
It is clear that $g_{c}<\prod\limits_{j=l_{c}+1}^{l_{c+1}+1}p_{j}$ and $(g,\prod\limits_{j=l_{c}+1}^{l_{c+1}+1}p_{j})=1$, we can choose $s_{c},t_{c}\in \mathbb{Z^{+}}$ such that
$$g_{c}s_{c}-t_{c}\prod\limits_{j=l_{c}+1}^{l_{c+1}+1}p_{j}=1.$$
For any $a\in\mathbb{Z^{+}}$, the above equality remains when we replace $s_{c},t_{c}$ by
$\tilde{s_{c}}=s_{c}+a\prod\limits_{j=l_{c}+1}^{l_{c+1}+1}p_{j}$ and $\tilde{t_{c}}=t_{c}+ag_{c}$ separately. By the celebrated Dirichlet's theorem in arithmetic progressions, we may choose $\tilde{s_{c}}$ to be $e$ different primes with $p_{l_{e+1}}<\tilde{s_{0}}<\cdots<\tilde{s_{e}}$.
Noting that $\tilde{t_{c}}<\tilde{s_{c}}$, and we have
$$m_{c}=\sum_{j=l_{c}+1}^{l_{c+1}}\frac{1}{p_{j}}+\frac{\tilde{t_{c}}}{\tilde{s_{c}}}+\frac{1}{\tilde{s_{c}}\prod_{j=l_{c}+1}^{l_{c+1}}{p_{j}}}.$$
Therefore we get
$$m=\sum_{c=1}^{e}\bigg\{\sum_{j=l_{c}+1}^{l_{c+1}}\frac{1}{p_{j}}+\frac{\tilde{t_{c}}}{\tilde{s_{c}}}+
\frac{1}{\tilde{s_{c}}\prod_{j=l_{c}+1}^{l_{c+1}}{p_{j}}}\bigg\}:=\sum\limits_{i=1}^{k}\frac{1}{n_{i}}.$$
It remains to show that
$$\Omega\{n_{1},...,n_{k}\}=\bigg\{m_{i_{1}}+
\cdots+m_{i_{d}}:1\leq i_{1}<\cdots< i_{d}\leq e,\{i_{1},...,i_{d}\}\subset\{1,...,e\}\bigg\},$$
and
$$\bigg|\bigg\{I:I\subset \{1,2,...,k\},\sum\limits_{i\in I}\frac{1}{n_{i}}\in\mathbb{Z}\bigg\}\bigg|=2^{e}.$$
By the construction of $m$, we find that
$$\bigg\{m_{i_{1}}+\cdots+m_{i_{d}}:1\leq i_{1}<\cdots< i_{d}\leq e,\{i_{1},...,i_{d}\}\subset\{1,...,e\}\bigg\}
\subset\Omega\{n_{1},...,n_{k}\}$$
and
$$\bigg|\bigg\{(i_{1},...,i_{d}):1\leq i_{1}<\cdots< i_{d}\leq e,\{i_{1},...,i_{d}\}\subset\{1,...,e\}\bigg\}\bigg|=2^{e}.$$
It follows that we need only to show that
$$\bigg|\bigg\{I:I\subset \{1,2,...,k\},\sum\limits_{i\in I}\frac{1}{n_{i}}\in \mathbb{Z}\bigg\}\bigg|=2^{e}.$$
For any positive integer $m$ divided by $e$ parts, we shall do this by the induction on $e$ . First of all, we consider $e=1$. At this time, the factorization of $m$ reduces to
$$m=\sum_{j=1}^{l_{1}}\frac{1}{p_{j}}+\frac{\tilde{t_{1}}}{\tilde{s_{1}}}+\frac{1}{\tilde{s_{1}}\prod_{j=1}^{l_{1}}{p_{j}}}$$
This is equivalent to showing that $\sum_{j\in J}\frac{1}{p_{j}}+\frac{q}{\tilde{s_{1}}}\not\in \mathbb{Z}$ for any $J\subset\{1,2,...,l_{1}\}$ and $q\leq \tilde{t_{1}}$.
The case $J=\emptyset$ is trivial. Provided that $J\neq\emptyset$, since $(\tilde{s_{1}},\prod\limits_{j\in J}{p_{j}})$=1, we can easily obtain that
$$\tilde{s_{1}}(\sum_{j\in J}\frac{1}{p_{j}}+\frac{q}{\tilde{s_{1}}})=\tilde{s_{1}}(\sum_{j\in J}\frac{1}{p_{j}})+q\not\equiv 0 \pmod {\tilde{s_{1}}}.$$
Hence $\sum\limits_{j\in J}\frac{1}{p_{j}}+\frac{q}{\tilde{s_{1}}}$ is not an integer.

Now suppose that we have showing that for $e-1$, we consider the case of $e$. If
$$\sum_{c=1}^{e}\bigg\{\sum_{j=l_{c}+1}^{l_{c+1}}\frac{\delta_{c,j}}{p_{j}}+\frac{\tilde{q_{c}}}{\tilde{s_{c}}}+
\frac{\delta_{c}}{\tilde{s_{c}}\prod_{j=l_{c}+1}^{l_{c+1}}{p_{j}}}\bigg\}\in\mathbb{Z},$$
where $\delta_{c,j}=0$ or $1$, $\delta_{c}=0$ or $1$ and $\tilde{q_{c}}\leq\tilde{t_{c}}$ for any $1\leq c\leq e$. Let
$$A=\sum_{c=1}^{e-1}\bigg\{\sum_{j=l_{c}+1}^{l_{c+1}}\frac{\delta_{c,j}}{p_{j}}+\frac{\tilde{q_{c}}}{\tilde{s_{c}}}+
\frac{\delta_{c}}{\tilde{s_{c}}\prod_{j=l_{c}+1}^{l_{c+1}}{p_{j}}}\bigg\},
B=\sum_{j=l_{e}+1}^{l_{e+1}}\frac{\delta_{e,j}}{p_{j}}+\frac{\tilde{q_{e}}}{\tilde{s_{e}}}+
\frac{\delta_{e}}{\tilde{s_{e}}\prod_{j=l_{e}+1}^{l_{e+1}}{p_{j}}}.$$
Note that both
$(\prod_{c=1}^{e-1}\tilde{s_{c}}\prod_{j=l_{c}+1}^{l_{c+1}}{p_{j}})\cdot A$ and $(\prod_{c=1}^{e-1}\tilde{s_{c}}\prod_{j=l_{c}+1}^{l_{c+1}}{p_{j}})\cdot(A+B)$ lie in the $\Z$, so that
$(\prod_{c=1}^{e-1}\tilde{s_{c}}\prod_{j=l_{c}+1}^{l_{c+1}}{p_{j}})\cdot B$ belongs to $\Z$, which implies that $B\in \Z$,
consequently $A$ is also a integer.

By the assumption of the induction, one has $2^{e-1},2$ choices to get an integer from partial sums of $A$ and $B$ independently. Hence for any $m \in \Z^{+}$, we conclude that
$$\bigg|\bigg\{I:I\subset \{1,2,...,k\},\sum\limits_{i\in I}\frac{1}{n_{i}}\in \mathbb{Z}\bigg\}\bigg|=2^{e}.$$
\end{proof}

\begin{acknowledgment} We thank Professor Zhi-Wei Sun and Hao Pan for their helpful comments.
\end{acknowledgment}


\begin{thebibliography}{99}

\bibitem{BFF88} M.A. Berger, A. Felzenbaum, A.S. Fraenkel, {\it Improvements to the Newman-Zn\'am result for disjoint covering systems}, Acta Arith. {\bf 50} (1988) 1--13.

\bibitem{C03} Y. G. Chen, {\it On integers of the forms $k^r-2^n$ and $k^r2^n + 1$ }, J. Number Theory {\bf 98} (2003) 310--319.

\bibitem{E50}P. Erd\"os, {\it On integers of the form $2^k + p$ and some related problems}, Summa Brasil. Math. {\bf 2} (1950) 113--123.

\bibitem{GS05}S. Guo, Z. W. Sun, {\it On odd covering systems with distinct moduli}, Adv. in Appl. Math. {\bf 35} (2005) 182--187.

\bibitem{PZ09}H. Pan and L. L. Zhao, {\it Clique numbers of graphs and irreducible exact m-covers of the integers}, Adv. in Appl. Math. {\bf 43} (2009), 24--30.

\bibitem{P74}$\check{\text{S}}$. Porubsk\'y,{\it Covering systems and generating functions}, Acta Arith. {\bf 26} (1974/1975) 223--231.

\bibitem{P76} $\check{\text{S}}$. Porubsk\'y, {\it On $m$ times covering systems of congruences}, Acta Arith. {\bf 29} (1976), 159--169.

\bibitem{S92} Z. W. Sun, {\it On exactly m times covers}, Israel J. Math.  {\bf 77} (1992),  345--348.

\bibitem{S95} Z. W. Sun, {\it Covering the integers by arithmetic sequences}, Acta Arith. {\bf 72} (1995), 109--129.

\bibitem{S96} Z. W. Sun, {\it Covering the integers by arithmetic sequences II}, Trans. Amer. Math. Soc. {\bf 348} (1996), 4279--4320.

\bibitem{S00}Z. W. Sun, {\it On integers not of the form $\pm pa\pm qb$}, Proc. Amer. Math. Soc. {\bf 128} (2000) 997--1002.

\bibitem{Z89} M. Z. Zhang, {\it A note on covering systems of residue classes}, J. Sichuan Univ. (Nat. Sci. Ed.) {\bf26} (1989), Special Issue, 185--188.

\bibitem{Z91} M.Z. Zhang, {\it On irreducible exactly m times covering system of residue classes}, J. Sichuan Normal Univ. Nat. Sci. Ed. {\bf 28} (1991) 403--408

\end{thebibliography}
     \end{document}